\newtheorem{corollary}{Corollary}
\newtheorem{lemma}{Lemma}
\newtheorem{theorem}{Theorem}
\newtheorem{fact}{Fact}
\theoremstyle{definition}
\newtheorem{definition}{Definition}
\DeclareMathOperator*{\e}{\mathbb E}
\DeclareMathOperator{\Var}{Var}
\begin{document}

\title{Improved concentration of Laguerre and Jacobi ensembles}

\author[1,2]{Yichen Huang (黄溢辰)\thanks{yichenhuang@fas.harvard.edu}}
\author[1]{Aram W. Harrow\thanks{aram@mit.edu}}
\affil[1]{Center for Theoretical Physics, Massachusetts Institute of Technology, Cambridge, Massachusetts 02139, USA}
\affil[2]{Department of Physics, Harvard University, Cambridge, Massachusetts 02138, USA}

\begin{CJK}{UTF8}{gbsn}

\maketitle

\end{CJK}

\begin{abstract}

We consider the asymptotic limits where certain parameters in the definitions of the Laguerre and Jacobi ensembles diverge. In these limits, Dette, Imhof, and Nagel proved that up to a linear transformation, the joint probability distributions of the ensembles become more and more concentrated around the zeros of the Laguerre and Jacobi polynomials, respectively. In this paper, we improve the concentration bounds. Our proofs are similar to those in the original references, but the error analysis is improved and arguably simpler. For the first and second moments of the Jacobi ensemble, we further improve the concentration bounds implied by our aforementioned results.

\end{abstract}

Preprint number: MIT-CTP/5469

\section{Introduction}

The Gaussian, Wishart, and Jacobi ensembles are three classical ensembles in random matrix theory. They find numerous applications in physics, statistics, and other branches of applied science. The Gaussian (Wishart) ensemble is also known as the Hermite (Laguerre) ensemble due to its relationship with the Hermite (Laguerre) polynomial.

Of particular interest are the asymptotic limits where certain parameters in the definitions of the ensembles diverge. In these limits, Dette, Imhof, and Nagel \cite{DI07, DN09} proved that up to a linear transformation, the joint probability distributions of the Hermite, Laguerre, and Jacobi ensembles become more and more concentrated around the zeros of the Hermite, Laguerre, and Jacobi polynomials, respectively. These results allow us to transfer knowledge on the zeros of orthogonal polynomials to the corresponding ensembles.

In this paper, we improve the concentration bounds for the Laguerre and Jacobi probability distributions around the zeros of the Laguerre and Jacobi polynomials, respectively. Our proofs are similar to those in the original references \cite{DI07, DN09}, but the error analysis is improved and arguably simpler. We also prove the concentration of the first and second moments of the Jacobi ensemble. The last result has found applications in quantum statistical mechanics \cite{HH22TET}.

The rest of this paper is organized as follows. Section \ref{s:r} presents our main results, which are compared with previous results in the literature. Proofs are given in Section \ref{s:p}.

\section{Results} \label{s:r}

In the literature, there is more than one definition of the Laguerre probability distribution. These definitions differ only by a linear transformation and are thus essentially equivalent. In this paper, we stick to one definition. When citing a result from the literature, we perform a linear transformation such that the result is presented for the definition we stick to. The same applies to the Jacobi case.

Let $n$ be the number of random variables in an ensemble. Let $\beta$ be the Dyson index, which can be an arbitrary positive number.

\subsection{Laguerre ensemble}\label{sec:laguerre}

We draw $\lambda_1\le\lambda_2\le\cdots\le\lambda_n$ from the Laguerre ensemble.

\begin{definition} [Laguerre ensemble] \label{def:le}
The probability density function of the $\beta$-Laguerre ensemble with parameters
\begin{equation} \label{eq:constr}
\alpha>(n-1)\frac{\beta}{2}
\end{equation}
is
\begin{equation} \label{eq:defl}
f_\textnormal{Lag}(\lambda_1,\lambda_2,\ldots,\lambda_n)\propto{\prod_{1\le i<j\le n}|\lambda_i-\lambda_j|^\beta}\prod_{i=1}^n\lambda_i^{\alpha-\frac{(n-1)\beta}{2}-1}e^{-\lambda_i/2},\quad\lambda_i>0.
\end{equation}
\end{definition}

For certain values of $\beta$, the Laguerre ensemble arises as the probability density function of the eigenvalues of a Wishart matrix $VV^*$, where $V$ is an $n\times \frac{2\alpha}{\beta}$ matrix with real ($\beta=1$), complex ($\beta=2$), or quaternionic ($\beta=4$) entries.  In each case the entries of $V$ are independent standard Gaussian random variables and $V^*$ denotes the conjugate transpose of $V$.

Let
\begin{equation}
    L_n^{(p)}(x):=\sum_{i=0}^n{n+p\choose n-i}\frac{(-x)^i}{i!},\quad p>-1
\end{equation}
be the Laguerre polynomial, whose zeros are all in the interval with endpoints \cite{IL92}
\begin{equation}
2n+p-2\pm\sqrt{1+4(n-1)(n+p-1)\cos^2\frac\pi{n+1}}.
\end{equation}
Let $x_1<x_2<\cdots<x_n$ be the zeros of the Laguerre polynomial $L_n^{(2\alpha/\beta-n)}(x/\beta)$.

We are interested in the limit $\alpha\to\infty$ but do not assume that $n\to\infty$. Note that if $\beta$ is a constant, then $n\to\infty$ implies that $\alpha\to\infty$; see (\ref{eq:constr}).

\begin{theorem} [Theorem 2.1 in Ref.~\cite{DI07}] \label{thm:l1}
For any $0<\epsilon<1$,
\begin{equation} \label{eq:thm1}
\Pr\left(\frac1{2\alpha}\max_{1\le i\le n}|\lambda_i-x_i|>\epsilon\right)\le4n(1+\epsilon^2/25)^\alpha e^{-\alpha\epsilon^2/25}.
\end{equation}
\end{theorem}

This theorem can be restated as

\begin{corollary} \label{c:1}
There exist positive constants $C_1,C_2$ such that for any $0<\epsilon<1$,
\begin{equation} \label{eq:cor1}
\Pr\left(\frac1{2\alpha}\max_{1\le i\le n}|\lambda_i-x_i|>\epsilon\right)\le C_1ne^{-C_2\alpha\epsilon^4}.
\end{equation}
\end{corollary}

\begin{theorem} [Theorem 2.4 in Ref.~\cite{DI07}] \label{thm:l2}
Let $\kappa\ge1$ be a parameter. If
\begin{equation} \label{eq:cond}
n-1+1/\beta\le2\alpha/\beta\le n-1+\kappa\quad\textnormal{and}\quad2\kappa\beta/\alpha<\epsilon<1,
\end{equation}
then there exist positive constants $C_1,C_2,C_3$ such that
\begin{equation} \label{eq:thm2}
\Pr\left(\frac1{2\alpha}\max_{1\le i\le n}|\lambda_i-x_i|>\epsilon\right)\le C_1n(e^{-C_2\alpha\epsilon^2/\kappa}+e^{C_3\kappa^2\beta-C_2\alpha\epsilon^2}).
\end{equation}
\end{theorem}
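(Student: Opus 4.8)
The plan is to realize both the random points $\lambda_i$ and the deterministic targets $x_i$ as squared singular values of one explicit bidiagonal matrix, and then to convert a matrix-perturbation bound into a tail bound through concentration of its chi-distributed entries. Concretely, I would invoke the Dumitriu--Edelman tridiagonal model: the ordered points $\lambda_1\le\cdots\le\lambda_n$ of the ensemble (\ref{eq:defl}) are distributed as the eigenvalues of $L=BB^\top$, where $B$ is lower bidiagonal with mutually independent entries $B_{ii}=\chi_{d_i}$ and $B_{i+1,i}=\chi_{e_i}$, with degrees of freedom $d_i=2\alpha-\beta(i-1)$ and $e_i=\beta(n-i)$. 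Replacing every $\chi_k$ by $\sqrt{k}$ gives a deterministic bidiagonal $\bar B$, and $\bar L=\bar B\bar B^\top$ is exactly the Jacobi (three-term recurrence) matrix of the polynomial $L_n^{(2\alpha/\beta-n)}(x/\beta)$, so its eigenvalues are precisely $x_1<\cdots<x_n$. The first step is therefore to record this algebraic identification, which reduces the theorem to comparing the spectra of $L$ and $\bar L$.

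The second step is a deterministic perturbation bound. Writing $\lambda_i=\sigma_i(B)^2$ and $x_i=\sigma_i(\bar B)^2$, using the difference of squares, and using Weyl's inequality for singular values, $|\sigma_i(B)-\sigma_i(\bar B)|\le\|B-\bar B\|$, I obtain
\[
\max_i|\lambda_i-x_i|\le\max_i|\sigma_i(B)-\sigma_i(\bar B)|\cdot\max_i\bigl(\sigma_i(B)+\sigma_i(\bar B)\bigr)\le\|B-\bar B\|\,\bigl(2\sqrt{x_n}+\|B-\bar B\|\bigr).
\]
The endpoint estimate quoted before Theorem~\ref{thm:l1} gives $x_n=O(\alpha)$ throughout the regime (\ref{eq:cond}), so after dividing by $2\alpha$ the event in (\ref{eq:thm2}) is contained, up to the harmless quadratic correction $\|B-\bar B\|^2$, in $\{\|B-\bar B\|>c\sqrt{\alpha}\,\epsilon\}$ for a suitable small $c$. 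Since $B-\bar B$ is bidiagonal, $\|B-\bar B\|\le\max_i|\chi_{d_i}-\sqrt{d_i}|+\max_i|\chi_{e_i}-\sqrt{e_i}|$, and it remains to bound the probability that a single chi-entry deviates from the square root of its degree of freedom by more than $\sim\sqrt{\alpha}\,\epsilon$.

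The third step is concentration of the individual entries. For the large-degree entries (the bulk, $d_i,e_i=\Theta(\alpha)$), the Laurent--Massart chi-square bounds give subgaussian tails $\Pr(|\chi_k-\sqrt k|>t)\le2e^{-ct^2}$, so a union bound over the $O(n)$ bulk entries with $t=c\sqrt{\alpha}\,\epsilon$ yields the factor $ne^{-C_2\alpha\epsilon^2}$; this accounts for the first summand of (\ref{eq:thm2}), and it is where the $\epsilon^2$ improvement over the $\epsilon^4$ of Theorem~\ref{thm:l1} originates, the quadratic $t^2$ of the chi tail passing directly to $\epsilon^2$. I would keep the upper and lower tails separate, noting the lower tail is vacuous once $\sqrt{\alpha}\,\epsilon$ exceeds $\sqrt k$. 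I would also remark that the factor $1/\kappa$ in the first summand of (\ref{eq:thm2}) reflects only a crude bound $x_n=O(\alpha\kappa)$ on the largest node, which the sharper endpoint estimate above can avoid.

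The main obstacle, and the source of both the hypotheses (\ref{eq:cond}) and the $\kappa$-dependence in (\ref{eq:thm2}), is the soft edge. The condition $2\alpha/\beta\le n-1+\kappa$ forces the smallest degrees of freedom $d_n=\beta(2\alpha/\beta-n+1)\in[1,\beta\kappa]$ and $e_{n-1}=\beta$, and more generally the last $\approx\kappa$ diagonal and subdiagonal entries have degrees of order $\beta\kappa$. These do not concentrate as $\alpha\to+\infty$, and they carry a nonvanishing deterministic bias because $\e[\chi_k]<\sqrt k$; across the edge this bias accumulates to size $O(\sqrt{\kappa})$ in operator norm, which is exactly why one needs $\epsilon>2\kappa\beta/\alpha$ to keep it inside the $2\alpha\epsilon$ budget. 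Controlling this corner block separately is the delicate part: it carries $\sum_{j\lesssim\kappa}\beta j=O(\beta\kappa^2)$ degrees of freedom in total, and estimating its contribution through a moment-generating-function bound rather than entrywise concentration is what produces the second summand $e^{C_3\kappa^2\beta-C_2\alpha\epsilon^2}$. I expect the bulk step to be routine and essentially all the work to lie in this edge bookkeeping: cleanly separating the $O(\sqrt\kappa)$ bias from the fluctuations while simultaneously controlling the small-degree $\chi$'s.
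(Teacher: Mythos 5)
First, a point of orientation: the paper does not prove Theorem~\ref{thm:l2} at all --- it is quoted from Ref.~\cite{DI07} and merely restated with implicit constants --- so the relevant comparison is with the paper's proof of the stronger Theorem~\ref{thm:l3}. Your steps 1--3 are essentially that proof. You use the same tridiagonal/bidiagonal model and the same uniform chi concentration $\Pr(|\chi_k-\sqrt k|>t)\le 2e^{-ct^2}$ (the paper's (\ref{eq:ctr})); the only structural difference is that you apply Weyl's inequality to the singular values of the bidiagonal factor $B$ and then multiply by $\sigma_i(B)+\sigma_i(\bar B)=O(\sqrt\alpha)$, whereas the paper applies Weyl directly to the eigenvalues of $\mathbf L=BB^T$ and handles the off-diagonal products $X_kY_l-\sqrt{kl}$ via Lemma~\ref{l:diff}. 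Both routes reduce the problem to entrywise deviations of size $\sqrt\alpha\,\epsilon$, and for $0<\epsilon<1$ both give $C_1ne^{-C_2\alpha\epsilon^2}$ with no $\kappa$-dependence, which is strictly stronger than (\ref{eq:thm2}) and implies it trivially since $\kappa\ge1$. Your bidiagonal route is a perfectly valid, arguably slightly cleaner, variant.

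The problem is your fourth step, to which you defer ``essentially all the work'': it is a phantom obstacle, and since the proof is incomplete by your own account until it is resolved, you should see that it simply dissolves. The tail bound (\ref{eq:ctr}) is uniform in the number of degrees of freedom $k$ --- it holds with the same constant for $k=1$ as for $k=\Theta(\alpha)$ --- and it is centered at $\sqrt k$, which is exactly the corresponding entry of $\bar B$, not at $\e\chi_k$. So the ``bias'' $\sqrt k-\e\chi_k$ never enters the argument, nothing ``accumulates'' across the edge in operator norm (for a bidiagonal perturbation the norm is controlled by the maximal entry deviation, as you yourself used in step 2), and no separate moment-generating-function analysis of a corner block is needed. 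This is precisely the observation that lets the paper upgrade Theorems~\ref{thm:l1} and~\ref{thm:l2} of Ref.~\cite{DI07} to Theorem~\ref{thm:l3}: the hypotheses (\ref{eq:cond}) and both $\kappa$-dependent terms in (\ref{eq:thm2}) are artifacts of the original error analysis, not features your argument needs to reproduce. Delete step 4, and steps 1--3 already constitute a complete proof of (a strengthening of) the stated theorem for $\epsilon<1$.
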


The original upper bound on $\Pr(\frac1{2\alpha}\max_{1\le i\le n}|\lambda_i-x_i|>\epsilon)$ in Theorem 2.4 of Ref.~\cite{DI07} is a complicated expression without implicit constants. The right-hand side of (\ref{eq:thm2}) is its simplification using implicit constants.

If condition (\ref{eq:cond}) is satisfied, (\ref{eq:thm2}) may be an improvement of (\ref{eq:cor1}). In particular, for a constant $\beta$, the right-hand side of (\ref{eq:thm2}) becomes $C'_1ne^{-C'_2\alpha\epsilon^2}$ ($C'_1,C'_2$ are positive constants) if and only if $\kappa$ is upper bounded by a constant.

As the main result of this subsection, Theorem \ref{thm:l3} is an improvement of Corollary \ref{c:1} and Theorem \ref{thm:l2}.

\begin{theorem} \label{thm:l3}
There exist positive constants $C_1,C_2$ such that for any $\epsilon>0$,
\begin{equation}
\Pr\left(\frac1{2\alpha}\max_{1\le i\le n}|\lambda_i-x_i|>\epsilon\right)\le C_1ne^{-C_2\alpha\epsilon\min\{\epsilon,1\}}.
\end{equation}
\end{theorem}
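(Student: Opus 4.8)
The plan is to realize the ordered sample $\lambda_1\le\cdots\le\lambda_n$ as the eigenvalues of a random symmetric tridiagonal matrix and to compare that matrix entrywise with a deterministic one whose spectrum is exactly $\{x_i\}$. Concretely, I would use the bidiagonal matrix model: the $\lambda_i$ are the eigenvalues of $T=BB^{\mathsf{T}}$, where $B$ is lower bidiagonal with independent entries $B_{ii}=\chi_{2\alpha-(i-1)\beta}$ and $B_{i+1,i}=\chi_{(n-i)\beta}$ (here $\chi_k$ is a chi variable with $k$ degrees of freedom; the constraint $\alpha>(n-1)\beta/2$ makes every degree-of-freedom positive, and this eigenvalue law matches (\ref{eq:defl}) with parameter $\alpha$). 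Replacing each $\chi_k$ by $\sqrt{k}=\sqrt{\e[\chi_k^2]}$ produces a deterministic bidiagonal $\bar B$; set $\bar T=\bar B\bar B^{\mathsf{T}}$. The key algebraic input, which is classical and underlies \cite{DI07}, is that $\bar T$ is the Jacobi (three-term recurrence) matrix of the relevant Laguerre polynomials, so its ordered eigenvalues are precisely $x_1<\cdots<x_n$.

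Since $T$ and $\bar T$ are symmetric with ordered eigenvalues $\lambda_i$ and $x_i$, Weyl's inequality gives $\max_i|\lambda_i-x_i|\le\|T-\bar T\|$. Both are tridiagonal, so by Gershgorin $\|T-\bar T\|\le\max_i|d_i-\bar d_i|+2\max_i|b_i-\bar b_i|$, where $d_i,b_i$ are the diagonal and off-diagonal entries of $T$ and $\bar d_i,\bar b_i$ those of $\bar T$. Thus, after a union bound over the $2n-1$ entries, it suffices to bound the probability that one entry deviates from its target by more than a constant times $2\alpha\epsilon$. Here $d_i=\chi_{2\alpha-(i-1)\beta}^2+\chi_{(n-i+1)\beta}^2$ with target $\bar d_i=\e[d_i]$, and $b_i=\chi_{(n-i)\beta}\,\chi_{2\alpha-(i-1)\beta}$ with target $\bar b_i=\sqrt{\e[b_i^2]}$.

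The diagonal entries are the bottleneck and set the final rate. Now $d_i-\bar d_i$ is the sum of a centered chi-square variable with $\approx2\alpha$ degrees of freedom and an independent centered chi-square with $O(1)$ degrees of freedom; the former dominates, and its sub-gamma (Bernstein) tail $\Pr(|\chi_k^2-k|>t)\le2\exp(-c\min\{t^2/k,\,t\})$ with $k\approx2\alpha$ and $t\approx2\alpha\epsilon$ yields exactly $\exp(-C\alpha\epsilon\min\{\epsilon,1\})$: the crossover at $\epsilon=1$ between the Gaussian regime $t^2/k\sim\alpha\epsilon^2$ and the exponential regime $t\sim\alpha\epsilon$ is precisely the $\min\{\epsilon,1\}$ in the statement, and the exponential upper tail of the chi-square forbids anything better for large $\epsilon$. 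The off-diagonal entries give a strictly smaller contribution: writing $b_i=\chi_m\chi_M$ with $m=(n-i)\beta$ fixed and $M\approx2\alpha$, the factor $\chi_M$ concentrates at scale $\sqrt\alpha$ while $\chi_m$ does not, so $|b_i-\bar b_i|\gtrsim2\alpha\epsilon$ forces $|\chi_m-\sqrt m|\gtrsim\sqrt\alpha\epsilon$ for moderate $\epsilon$ and forces one factor to be of order $\sqrt{\alpha\epsilon}$ for large $\epsilon$; the sub-Gaussian chi tail $\Pr(|\chi_m-\sqrt m|>s)\le Ce^{-cs^2}$ then gives $e^{-c\alpha\epsilon^2}$ and $e^{-c\alpha\epsilon}$ respectively, in both cases at most $e^{-C\alpha\epsilon\min\{\epsilon,1\}}$. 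Combining the two estimates with the union bound produces the claimed $C_1 n\,e^{-C_2\alpha\epsilon\min\{\epsilon,1\}}$.

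The main obstacle, and the point where this analysis departs from and improves on \cite{DI07}, is controlling the off-diagonal deviations $b_i-\bar b_i$: the low-degree factor $\chi_{(n-i)\beta}$ never concentrates as $\alpha\to\infty$, so one must isolate its $O(1)$ fluctuation, multiplied by the $O(\sqrt\alpha)$-scale factor $\chi_{2\alpha-(i-1)\beta}$, and bound the resulting product tail cleanly rather than through the cruder estimates that lead to the $\epsilon^4$ rate of Theorem \ref{thm:l1}. A secondary but essential point is that, because $\e\chi_k\neq\sqrt k$, the correct center for the off-diagonal entries is the root-mean-square value $\sqrt{\e[b_i^2]}$ (which reproduces the Laguerre Jacobi matrix $\bar T$) and not the entrywise mean; the $O(\sqrt\alpha)$ gap between these two choices is exactly what the product-tail argument must absorb.
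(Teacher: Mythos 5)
Your proposal is correct and follows essentially the same route as the paper's proof: the same Dumitriu--Edelman bidiagonal/tridiagonal model, the same deterministic comparison matrix whose spectrum is the set of Laguerre zeros, Weyl's inequality plus a row-sum bound on the tridiagonal perturbation, Bernstein-type chi-square tails for the diagonal entries, and sub-Gaussian chi tails combined with a product-difference estimate for the off-diagonal entries, yielding the same $\min\{\epsilon,1\}$ crossover. The only cosmetic difference is that the paper handles the off-diagonal step via Lemma \ref{l:diff} with the single deviation scale $\sqrt{\alpha\epsilon\min\{\epsilon,1\}}$ for each chi factor, rather than splitting into moderate- and large-$\epsilon$ regimes as you do.
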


Let $n\le s$ be two positive integers and $V$ be an $n\times s$ matrix whose elements are independent standard real Gaussian random variables. Then, $VV^T$ is a real Wishart matrix, whose joint eigenvalue distribution is given by (\ref{eq:defl}) with $\beta=1$ and $\alpha=s/2$. Theorem \ref{thm:l3} implies that

\begin{corollary} \label{cor:l}
Let $\lambda_1\le\lambda_2\le\cdots\le\lambda_n$ be the eigenvalues of $VV^T$ and $x_1<x_2<\cdots<x_n$ be the zeros of the Laguerre polynomial $L_n^{(s-n)}(x)$. There exist positive constants $C_1,C_2$ such that for any $\epsilon>0$,
\begin{equation}
\Pr\left(\frac1s\max_{1\le i\le n}|\lambda_i-x_i|>\epsilon\right)\le C_1ne^{-C_2s\epsilon\min\{\epsilon,1\}}.
\end{equation}
\end{corollary}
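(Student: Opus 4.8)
The plan is to observe that Corollary \ref{cor:l} is a direct specialization of Theorem \ref{thm:l3} to the parameter choice $\beta=1$ and $\alpha=s/2$, so the bulk of the work is verifying that the Wishart setup matches the Laguerre ensemble with exactly these parameters and then tracking how the constants transform.

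First I would recall the classical formula for the joint eigenvalue density of a real Wishart matrix. For $V$ an $n\times s$ matrix ($s\ge n$) with independent standard real Gaussian entries, the ordered eigenvalues $\lambda_1\le\cdots\le\lambda_n$ of $VV^T$ have density proportional to $\prod_{i<j}|\lambda_i-\lambda_j|\prod_i\lambda_i^{(s-n-1)/2}e^{-\lambda_i/2}$ on $\lambda_i>0$. Comparing this with (\ref{eq:defl}) at $\beta=1$, the two agree precisely when the exponents on $\lambda_i$ match, i.e.\ $\alpha-(n-1)/2-1=(s-n-1)/2$, which solves to $\alpha=s/2$. This confirms that the Wishart eigenvalues follow the $\beta=1$ Laguerre ensemble with $\alpha=s/2$, as asserted in the text preceding the corollary.

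Next I would check that the admissibility constraint (\ref{eq:constr}) is met: with $\beta=1$ and $\alpha=s/2$ it reads $s/2>(n-1)/2$, i.e.\ $s>n-1$, which holds because $s\ge n$ by hypothesis. I would also verify that the reference points coincide: Theorem \ref{thm:l3} centers the $\lambda_i$ at the zeros of $L_n^{(2\alpha/\beta-n)}(x/\beta)$, and substituting $\beta=1$, $\alpha=s/2$ gives $L_n^{(s-n)}(x)$, exactly the polynomial whose zeros $x_1<\cdots<x_n$ appear in the corollary.

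Finally I would carry out the substitution in the bound itself. Since $2\alpha=s$, the normalization $\frac1{2\alpha}$ becomes $\frac1s$, and the exponent $C_2\alpha\epsilon\min\{\epsilon,1\}$ becomes $\frac{C_2}2 s\epsilon\min\{\epsilon,1\}$; absorbing the factor $1/2$ into a relabeled positive constant yields the claimed inequality. The only point requiring genuine care is the first step---pinning down the exact correspondence between the Wishart parameters $(n,s)$ and the ensemble parameters $(\beta,\alpha)$, including the precise exponent on $\lambda_i$---since an off-by-one error there would shift $\alpha$ and hence the identity of the centering polynomial. Beyond that bookkeeping, no new analysis is needed.
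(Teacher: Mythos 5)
Your proposal is correct and matches the paper's route exactly: the paper gives no separate proof, simply noting that the Wishart eigenvalue distribution is the Laguerre ensemble with $\beta=1$, $\alpha=s/2$ and that Theorem \ref{thm:l3} then yields the corollary, which is precisely the substitution you carry out (your verification of the exponent $(s-n-1)/2$, the constraint $s>n-1$, and the polynomial parameter $s-n$ is all correct bookkeeping).
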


Analogues of Corollary \ref{cor:l} for complex ($\beta=2$) and quaternionic ($\beta=4$) Wishart matrices also follow directly from Theorem \ref{thm:l3}.

Let
\begin{equation}
M_1^\textnormal{L} := \frac{1}{n} \sum_{i=1}^n \lambda_i
\end{equation}
be the first moment of the Laguerre ensemble. The distribution of $M_1^\textnormal{L}$ has a particularly simple form.

\begin{fact} \label{fact:M1L}
$M_1^\textnormal{L}$ is distributed as $\frac 1n \chi_{2\alpha n}^2$, where $\chi_k^2$ denotes the chi-square distribution with $k$ degrees of freedom.
\end{fact}

Thus, the concentration of $M_1^\textnormal{L}$ follows directly from the tail bound \cite{LM00, IL06} for the chi-square distribution.

The distribution of the second moment of the Laguerre ensemble does not have a simple form. Furthermore, it is complicated to obtain concentration bounds for the distribution, so we omit this analysis here.

\subsection{Jacobi ensemble}

We draw $\mu_1\le\mu_2\le\cdots\le\mu_n$ from the Jacobi ensemble.

\begin{definition} [Jacobi ensemble] \label{def:je}
The probability density function of the $\beta$-Jacobi ensemble with parameters $a,b>0$ is
\begin{equation}
f_\textnormal{Jac}(\mu_1,\mu_2,\ldots,\mu_n)\propto{\prod_{1\le i<j\le n}|\mu_i-\mu_j|^\beta}\prod_{i=1}^n(1-\mu_i)^{a-1}(1+\mu_i)^{b-1},\quad-1\le\mu_i\le1.
\end{equation}
\end{definition}

The Jacobi ensemble can be interpreted as the probability density function of the eigenvalues of a random matrix ensemble.  In the complex ($\beta=2$) case, let $Q_1$ and $Q_2$ be uniformly random projectors in $\mathbb C^{2n+a+b-2}$ with ranks $n$ and $n+b-1$, respectively. Then, $\frac{1+\mu_1}{2},\frac{1+\mu_2}{2}, \ldots, \frac{1+\mu_n}{2}$ are the non-zero eigenvalues of $Q_1Q_2Q_1$ \cite{Col05}. Equivalently, they are the squared singular values of an $n \times (n+b-1)$ rectangular block within a Haar-random unitary matrix of dimension $2n+a+b-2$.  A random matrix interpretation for general $\beta$ is given in Ref.~\cite{KN04}, but it has less of a natural connection to applications.

The Jacobi polynomial is defined as
\begin{equation} \label{eq:jacpoly}
    P_n^{p,q}(y):=\frac{\Gamma(n+p+1)}{\Gamma(n+p+q+1)}\sum_{i=0}^n\frac{\Gamma(n+p+q+i+1)}{i!(n-i)!\Gamma(p+i+1)}\left(\frac{y-1}2\right)^i,
  \end{equation}
where $\Gamma$ is the gamma function. It is well known that all zeros of the Jacobi polynomial are in the interval $(-1,1)$. Let $y_1<y_2<\cdots<y_n$ be the zeros of the Jacobi polynomial $P_n^{2a/\beta-1,2b/\beta-1}(y)$.

\subsubsection{Pointwise approximation}

In this subsubsection, we are interested in the limit $a+b\to\infty$ but do not assume that $\min\{a,b\}\to\infty$.

\begin{theorem} [Theorem 2.1 in Ref.~\cite{DN09}] \label{thm:j1}
For any $0<\epsilon\le1/2$,
\begin{equation}
\Pr\left(\max_{1\le i\le n}|\mu_i-y_i|>\epsilon\right)\le4(2n-1)\left(1+\frac{\epsilon^2}{162+2\epsilon^2}\right)^{a+b}e^{-\frac{(a+b)\epsilon^2}{162+2\epsilon^2}}.
\end{equation}
\end{theorem}

This theorem can be restated as

\begin{corollary} \label{c:3}
There exist positive constants $C_1,C_2$ such that for any $0<\epsilon\le1/2$,
\begin{equation}
\Pr\left(\max_{1\le i\le n}|\mu_i-y_i|>\epsilon\right)\le C_1ne^{-C_2(a+b)\epsilon^4}.
\end{equation}
\end{corollary}

As the main result of this subsubsection, Theorem \ref{thm:j2} is an improvement of Corollary \ref{c:3}.

\begin{theorem} \label{thm:j2}
There exist positive constants $C_1,C_2$ such that for any $\epsilon>0$,
\begin{equation}
\Pr\left(\max_{1\le i\le n}|\mu_i-y_i|>\epsilon\right)\le C_1ne^{-C_2(a+b)\epsilon^2}.
\end{equation}
\end{theorem}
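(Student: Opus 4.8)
The plan is to realize the $\beta$-Jacobi ensemble through its tridiagonal model and thereby reduce the statement to the concentration of finitely many independent random variables. By the canonical-moment / Killip--Nenciu representation (cf.\ \cite{KN04}), the ordered points $\mu_1\le\cdots\le\mu_n$ are the eigenvalues of a real symmetric tridiagonal matrix $J$ whose entries are explicit functions of independent random variables $p_1,p_2,\dots$, each Beta-distributed with parameters proportional to $a,b,\beta$. Writing $\zeta_k$ for the standard transforms of the $p_k$, the diagonal entries of $J$ are affine in the $\zeta_k$ while the off-diagonal entries are square roots $\sqrt{\eta_k}$ of products $\eta_k$ of consecutive $\zeta_k$. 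I would then invoke the classical fact that the zeros $y_1<\cdots<y_n$ of $P_n^{2a/\beta-1,2b/\beta-1}$ are exactly the eigenvalues of the deterministic $n\times n$ Jacobi matrix $\bar J$ assembled from the recurrence coefficients of that polynomial. The means $\e[p_k]$ are the canonical moments generating $\bar J$ up to a deterministic bias of order $1/(a+b)$, which is harmless: whenever $\epsilon\lesssim1/(a+b)$ the asserted bound already exceeds $1$.

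The second step is a deterministic perturbation bound. Since $J$ and $\bar J$ are Hermitian, Weyl's inequality gives $\max_{1\le i\le n}|\mu_i-y_i|\le\|J-\bar J\|_{\mathrm{op}}$, and because $J-\bar J$ is tridiagonal (at most three nonzero entries per row) its operator norm is at most $3\max_{k,\ell}|J_{k\ell}-\bar J_{k\ell}|$. A union bound over the $O(n)$ entries reduces the theorem to a single scalar estimate: $\Pr(|J_{k\ell}-\bar J_{k\ell}|>t)\le 2e^{-C(a+b)t^2}$ uniformly in $k,\ell$. Summing the $O(n)$ tails then yields $C_1ne^{-C_2(a+b)\epsilon^2}$; here the quadratic exponent is valid for all $\epsilon>0$ because $\mu_i,y_i\in[-1,1]$ forces the probability to vanish once $\epsilon$ exceeds a constant.

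The heart of the matter --- and the step where the gain over Theorem \ref{thm:j1} resides --- is the sharp sub-Gaussian tail for the entries. A Beta variable with parameter sum $\asymp a+b$ has bounded support and concentrates on scale $(a+b)^{-1/2}$ with a sub-Gaussian tail, so the diagonal entries, being Lipschitz in the $\zeta_k$, inherit $e^{-C(a+b)t^2}$ immediately. The obstacle is the off-diagonal entries $\sqrt{\eta_k}$: since $x\mapsto\sqrt x$ is not Lipschitz near $0$, the naive estimate $|\sqrt{\eta_k}-\sqrt{\bar\eta_k}|\le|\eta_k-\bar\eta_k|/\sqrt{\bar\eta_k}$ amplifies the fluctuation by $\bar\eta_k^{-1/2}$, which blows up for the entries governing eigenvalues near the spectral edge, where $\bar\eta_k\to0$. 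Forcing $|\eta_k-\bar\eta_k|<\epsilon^2$ to guarantee $|\sqrt{\eta_k}-\sqrt{\bar\eta_k}|<\epsilon$ degrades the Beta tail $e^{-C(a+b)s^2}$ into $e^{-C(a+b)\epsilon^4}$, which is precisely the weaker exponent of Theorem \ref{thm:j1}.

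I expect this amplification to be the main obstacle, and I would remove it using the relative concentration of Beta variables near the boundary. When $\bar\eta_k$ is small, the standard deviation of $\eta_k$ scales like $\sqrt{\bar\eta_k}\,(a+b)^{-1/2}$, so the ratio $|\eta_k-\bar\eta_k|/\sqrt{\bar\eta_k}$ stays of order $(a+b)^{-1/2}$ and the $\bar\eta_k^{-1/2}$ blow-up cancels. Concretely I would establish $\Pr(|\sqrt{\eta_k}-\sqrt{\bar\eta_k}|>t)\le 2e^{-C(a+b)t^2}$ directly from the Beta density of the underlying $p_j$, rather than routing through the additive deviation of $\eta_k$. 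Once this boundary-aware estimate is in hand, the Weyl bound and the union bound assemble into Theorem \ref{thm:j2} mechanically, mirroring the Laguerre argument behind Theorem \ref{thm:l3}.
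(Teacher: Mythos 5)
Your proposal follows essentially the same route as the paper: the Killip--Nenciu tridiagonal model, Weyl's inequality plus a union bound over the $O(n)$ entries, and --- crucially --- the same key idea of proving sub-Gaussian concentration of the square-root factors in the off-diagonal entries \emph{directly} (exploiting that the Beta variance shrinks proportionally to the distance from the boundary of the support), rather than passing through the additive deviation of their squares, which is exactly what degrades the exponent to $\epsilon^4$ in Theorem \ref{thm:j1}. The paper packages this as the bounds (\ref{eq:btail4})--(\ref{eq:btail5}) on $\sqrt{1\pm Z_i}$ combined with Lemma \ref{l:diff} for the product of four such factors, but the substance is identical to your boundary-aware estimate.
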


Section 3 of Ref.~\cite{DN09} presents several applications of Theorem \ref{thm:j1}. Most of them can be improved by using Theorem \ref{thm:j2}. We discuss one of them in detail.

Let $\beta$ be a positive constant. Consider the limit $n\to\infty$ with
\begin{equation}
a=\omega(n),\quad a=\Theta(b).
\end{equation}
Let $\delta(\cdot)$ be the Dirac delta. The semicircle law with radius $r$ is a probability distribution on the interval $[-r,r]$ with density function
\begin{equation}
f_\textnormal{SC}(\mu)\propto\sqrt{r^2-\mu^2}.
\end{equation}

\begin{corollary} \label{cor:j}
The empirical distribution
\begin{equation}
f(\mu):=\frac1n\sum_{i=1}^n\delta\left(\mu-\sqrt\frac{a+b}{2abn\beta}\big((a+b)\mu_i+a-b\big)\right)
\end{equation} 
of linearly transformed $\mu_i$ converges weakly to the semicircle law with radius $2$ almost surely.
\end{corollary}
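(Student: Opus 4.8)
\section*{Proof proposal for Corollary \ref{cor:j}}

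The plan is to prove the corollary in two stages: first reduce the statement about the random points $\mu_i$ to a deterministic statement about the Jacobi zeros $y_i$ using Theorem \ref{thm:j2}, and then show that the linearly transformed zeros have an empirical distribution converging to the semicircle law by identifying the limiting tridiagonal (Jacobi) matrix with that of the Hermite polynomials. Denote the affine map by $T(\mu):=\sqrt{\tfrac{a+b}{2abn\beta}}\big((a+b)\mu+a-b\big)$, so that $f$ is the empirical distribution of $T(\mu_1),\dots,T(\mu_n)$. With $\gamma=a/b$ fixed and $a=\omega(n)$, the scale factor obeys $\sqrt{\tfrac{a+b}{2abn\beta}}\,(a+b)=\Theta(\sqrt{a/n})\to\infty$, so substituting $y_i$ for $\mu_i$ is not obviously harmless. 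However, Theorem \ref{thm:j2} gives exponential concentration: choosing $\epsilon_n=\Theta\big(\sqrt{\log n/(a+b)}\big)$ makes $C_1 n e^{-C_2(a+b)\epsilon_n^2}$ summable in $n$, so by Borel--Cantelli almost surely $\max_i|\mu_i-y_i|\le\epsilon_n$ for all large $n$. The induced discrepancy is then $\max_i|T(\mu_i)-T(y_i)|=\Theta(\sqrt{a/n})\,\epsilon_n=\Theta\big(\sqrt{\log n/n}\big)\to0$. Since the points are already sorted, this controls the $\infty$-Wasserstein (hence Lévy) distance between the empirical distributions of $\{T(\mu_i)\}$ and $\{T(y_i)\}$, so it suffices to prove that the latter converges weakly to the semicircle law of radius $2$.

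For the deterministic step I would use that the zeros $y_1<\dots<y_n$ of $P_n^{2a/\beta-1,2b/\beta-1}$ are exactly the eigenvalues of the $n\times n$ symmetric tridiagonal Jacobi matrix $J_n$ built from the three-term recurrence coefficients of the Jacobi polynomials, so that $\{T(y_i)\}$ is the spectrum of $T(J_n)=\sqrt{\tfrac{a+b}{2abn\beta}}\big((a+b)J_n+(a-b)I\big)$. Writing $p=2a/\beta-1$, $q=2b/\beta-1$ and expanding for $k\le n-1\ll p,q$, the key computation is that the affine map is tuned so that the leading order of the diagonal cancels exactly: one finds $(a+b)(J_n)_{kk}+(a-b)=-\frac{2\beta D\,k(k+p+q+1)}{(2k+p+q)(2k+p+q+2)}$ with $D=q-p$, which after the outer scaling is $O(k/\sqrt{bn})\to0$ uniformly, while the off-diagonal entries satisfy $[T(J_n)]_{k,k+1}^2=\frac{(a+b)^3}{2abn\beta}(J_n)_{k,k+1}^2\to\frac{k+1}{n}$. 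Hence $T(J_n)$ converges entrywise to the tridiagonal matrix $M_n$ with zero diagonal and off-diagonal entries $\sqrt{(k+1)/n}$, which is precisely $1/\sqrt n$ times the Jacobi matrix of the Hermite polynomials; its eigenvalues are the Hermite zeros rescaled by $\sqrt n$, whose empirical distribution is classically the semicircle law of radius $2$.

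Finally I would transfer the limit from $M_n$ to $T(J_n)$ quantitatively. A careful but routine estimate of the relative errors (each of order $k/p=O(n/a)\to0$) shows $\frac1n\|T(J_n)-M_n\|_F^2\to0$; by the Hoffman--Wielandt inequality the sorted eigenvalues of the two matrices then agree in mean square, so their empirical spectral distributions share the same weak limit, namely the semicircle law of radius $2$. Equivalently one can run the method of moments, reducing $\frac1n\operatorname{Tr}[T(J_n)^{2k}]$ to the Catalan-number count of Dyck paths carried by $M_n$.

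I expect the main obstacle to be the error analysis of this deterministic step: extracting the subleading term of the diagonal after the exact cancellation, and bounding the accumulated off-diagonal relative errors uniformly in $k$ so that the Frobenius bound survives division by $n$; this is where the hypothesis $a=\omega(n)$ enters in an essential way, since the off-diagonal and diagonal contributions to $\frac1n\|T(J_n)-M_n\|_F^2$ are of orders $O((n/a)^2)$ and $O(\gamma n/a)$ respectively. The probabilistic reduction, by contrast, is a soft consequence of the exponential concentration furnished by Theorem \ref{thm:j2}, the only delicate point being that the diverging transformation scale $\Theta(\sqrt{a/n})$ is defeated by the even smaller admissible deviation $\epsilon_n=\Theta(\sqrt{\log n/(a+b)})$.
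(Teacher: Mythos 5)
Your proposal is correct, and the probabilistic half of it is exactly the argument the paper has in mind: the paper's entire ``proof'' of Corollary \ref{cor:j} is the one-sentence remark that Example 3.4 of Ref.~\cite{DN09} already establishes the result for $\omega(n)=a=o(n^2/\ln n)$ via Theorem \ref{thm:j1}, and that substituting Theorem \ref{thm:j2} removes the upper restriction on $a$. Your Borel--Cantelli reduction with $\epsilon_n=\Theta(\sqrt{\log n/(a+b)})$, and the observation that the diverging scale $\Theta(\sqrt{a/n})$ is beaten to give a displacement $\Theta(\sqrt{\log n/n})$, is precisely why the $\epsilon^2$ rate suffices for all $a=\omega(n)$; indeed, redoing your computation with the $\epsilon^4$ rate of Theorem \ref{thm:j1} forces $\epsilon_n=\Theta((\log n/(a+b))^{1/4})$ and recovers exactly the threshold $a=o(n^2/\ln n)$ quoted in the paper --- a good consistency check. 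Where you go beyond the paper is the deterministic half: the semicircle limit for the transformed Jacobi zeros is simply cited (from Refs.~\cite{DN09, Nag14}) rather than proved, whereas you reconstruct it from the three-term recurrence, verify the exact cancellation $(a+b)\alpha_k+(a-b)=-\tfrac{2\beta(q-p)k(k+p+q+1)}{(2k+p+q)(2k+p+q+2)}$ (which is correct), identify the limiting tridiagonal matrix as the rescaled Hermite Jacobi matrix, and transfer the spectra via Hoffman--Wielandt. That extra work buys a self-contained proof at the cost of the Frobenius-norm error bookkeeping you flag; it is routine and your stated orders $O((n/a)^2)$ and $O(n/a)$ for the two contributions are right, so nothing in the plan fails.
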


For $\omega(n)=a=o(n^2/\ln n)$, Corollary \ref{cor:j} was proved in Example 3.4 of Ref.~\cite{DN09} using Theorem \ref{thm:j1}. Using Theorem \ref{thm:j2} instead, the same proof becomes valid for any $a=\omega(n)$.

Corollary \ref{cor:j} is very similar to Theorem 2.1 in Ref.~\cite{Nag14}.

\subsubsection{Moments}\label{sec:moments}

Theorem \ref{thm:j2} implies the concentration of any smooth multivariate function of $\mu_1,\mu_2,\ldots,\mu_n$. The main result of this subsubsection is tighter concentration bounds (than those implied by Theorem \ref{thm:j2}) for the first and second moments of the Jacobi ensemble.

Let
\begin{equation}
N:=a+b+\beta(n-1).
\end{equation}
Suppose that $\beta=\Theta(1)$ is a positive constant and that $a+b=\Omega(1)$. In this subsubsection, we are interested in the limit $N\to\infty$. This means that $a+b\to\infty$ or $n\to\infty$ or both.

Let
\begin{equation}
M^\textnormal{J}_1:=\frac1n\sum_{i=1}^n\mu_i,\quad M^\textnormal{J}_2:=\frac1n\sum_{i=1}^n(\mu_i-\e M^\textnormal{J}_1)^2
\end{equation}
be the first and shifted second moments of the Jacobi ensemble. Equation (B.7) of Ref.~\cite{MRW17} implies that
\begin{equation}
\e M^\textnormal{J}_1=\frac{b-a}N,\quad\e M^\textnormal{J}_2=\frac{\beta n(2a+\beta n)(2b+\beta n)}{2N^3}+O(1/N).
\end{equation}
Indeed, $\e M^\textnormal{J}_2$ can be calculated exactly in closed form. The expression is lengthy and simplifies to the above using the Big-O notation.

\begin{theorem} [concentration of moments] \label{thm:moment}
For any $\epsilon>0$,
\begin{equation}
\Pr(|M^\textnormal{J}_1-\e M^\textnormal{J}_1|>\epsilon)=O(e^{-\Omega(Nn\epsilon^2)}),\quad\Pr(|M^\textnormal{J}_2-\e M^\textnormal{J}_2|>\epsilon)=O(e^{-\Omega(N\epsilon)\min\{N\epsilon,n\}}).\label{eq:20}
\end{equation}
\end{theorem}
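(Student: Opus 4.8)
The plan is to work in the tridiagonal (Killip--Nenciu) matrix model of the $\beta$-Jacobi ensemble, which is the natural setting for the recurrence-coefficient analysis behind Theorem~\ref{thm:j2}. In this model $\mu_1,\dots,\mu_n$ are the eigenvalues of a random symmetric tridiagonal matrix $J$, with spectrum in $[-1,1]$, whose diagonal entries $b_1,\dots,b_n$ and off-diagonal entries $a_1,\dots,a_{n-1}$ are smooth functions of independent Beta-distributed variables of parameter $\Theta(N)$. Two structural facts do the work. First, the dependency graph is bounded: each entry depends on only $O(1)$ of the underlying variables and each variable affects only $O(1)$ entries. Second, each entry is bounded by $1$ and, being governed by a Beta (equivalently $\chi$) variable of parameter $\Theta(N)$, concentrates at scale $N^{-1/2}$, with variance $O(1/N)$ and sub-exponential tail parameter $O(1/N)$, uniformly in $k$. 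The moments then become low-degree polynomials in these variables through $nM_1=\operatorname{tr}J=\sum_k b_k$ and $nM_2=\operatorname{tr}\big((J-\e M_1\,I)^2\big)=\sum_k(b_k-\e M_1)^2+2\sum_k a_k^2$.

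The first moment is immediate. The summands $b_k$ have variance $O(1/N)$, and the bounded dependency graph adds only $O(1)$ covariance per term, so $\Var(\operatorname{tr}J)=O(n/N)$. Bernstein's inequality, applied across the bounded dependency graph at an $O(1)$ cost, then gives $\Pr(|M_1-\e M_1|>\epsilon)=\Pr(|\operatorname{tr}J-\e\operatorname{tr}J|>n\epsilon)\le 2\exp(-\Omega(Nn\epsilon^2))$ for $\epsilon\le O(1)$, the sub-exponential correction having its crossover at $\epsilon\asymp1$; since $|M_1|\le1$ deterministically, the claim is trivial for larger $\epsilon$. This is the first estimate in (\ref{eq:20}).

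The second moment is the crux, and is where the two regimes of (\ref{eq:20}) come from: the summands $a_k^2$ and $(b_k-\e M_1)^2$ are squares of the (bounded, variance-$O(1/N)$) entries, hence themselves sub-exponential with parameter $O(1/N)$. Splitting each entry into its mean plus fluctuation, a squared entry contributes a part linear in the fluctuation, sub-Gaussian with proxy $O(m_k^2/N)$, plus the squared fluctuation, sub-exponential with parameter $O(1/N)$, where $\bar b_k,m_k$ denote the means of $b_k,a_k$. Summing over $k$ gives $\Var(nM_2)=O\big(N^{-1}\big[\sum_k(\bar b_k-\e M_1)^2+\sum_k m_k^2\big]\big)+O(n/N^2)$. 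The bracketed sum is $O(\e(nM_2))$, and the stated formula gives $\e(nM_2)=n\,\e M_2=O(n^2/N)$, since $(2a+\beta n)(2b+\beta n)=O(N^2)$ by AM--GM, so $\Var(nM_2)=O(n^2/N^2)$, while the largest sub-exponential parameter among the summands is $O(1/N)$. Feeding the variance $O(n^2/N^2)$ and the sub-exponential scale $O(1/N)$ into Bernstein's inequality over the bounded dependency graph yields
\[
\Pr(|M_2-\e M_2|>\epsilon)=\Pr(|nM_2-\e(nM_2)|>n\epsilon)\le 2\exp\!\Big(-\Omega\big(\min\{N^2\epsilon^2,\,Nn\epsilon\}\big)\Big),
\]
the crossover $N\epsilon\asymp n$ being exactly where the sub-Gaussian exponent $(n\epsilon)^2/(n^2/N^2)=N^2\epsilon^2$ meets the sub-exponential exponent $(n\epsilon)/(1/N)=Nn\epsilon$. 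This is the second estimate in (\ref{eq:20}).

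The main obstacle is the pair of uniform entrywise bounds, variance $O(1/N)$ and sub-exponential parameter $O(1/N)$ for every $b_k,a_k$, across the whole regime $N\to\infty$, which runs from $a+b\to\infty$ at fixed $n$ to $n\to\infty$ at bounded $a+b$, and in particular for the edge coefficients (whose governing $\chi$ has few degrees of freedom) and for unbalanced $(a,b)$. Establishing them amounts to controlling the explicit smooth maps from the Beta variables to the recurrence coefficients together with the standard concentration of a Beta law of parameter $\Theta(N)$. Once they are in place, the decisive saving is that $\Var(nM_2)$ equals $\e(nM_2)/N$ up to constants, so the smallness of the mean, $\e M_2=O(n/N)$, forces the sharp variance $O(n^2/N^2)$ rather than the crude $O(n/N)$ a worst-case entry scale would give; the remaining bookkeeping for the dependency graph and for the negligible cross terms from the $\e M_1$ shift is then routine.
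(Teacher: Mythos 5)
Your reduction to the Killip--Nenciu tridiagonal model and the identities $nM_1=\tr(\mathbf J/2)$, $nM_2=\tr((\mathbf J/2-Y_1I)^2)$ match the paper, and your final exponents are the right ones. But the load-bearing claim of your argument --- that every recurrence coefficient has variance and sub-exponential parameter $O(1/N)$ \emph{uniformly in $k$}, because each is governed by a Beta law ``of parameter $\Theta(N)$'' --- is false, and it is not a technical obstacle that can be patched by more careful bookkeeping. The underlying Beta variables $Z_i$ have total parameter $a+b+(n-i/2)\beta$, which ranges from $\Theta(N)$ down to $\Theta(a+b)$, and the theorem only assumes $a+b=\Omega(1)$. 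Concretely, for $a=b=1$, $\beta=2$ and $n$ large (so $N\approx 2n$), the variable $Z_{2n}\sim B(1,1)$ is uniform on $[-1,1]$ with variance $1/3$, and the corner entries of $\mathbf J$ fluctuate at scale $\Theta(1)$, not $N^{-1/2}$. Worse, even the \emph{summed} per-entry variances are off: $\sum_i\Var(\mathbf J_{i,i})$ behaves like $\sum_i (a+b+(n-i)\beta)^{-1}=\Theta(\beta^{-1}\log(N/(a+b)))$, not $O(n/N)$, so a per-entry Bernstein argument cannot reach $\Var(nM_1)=O(n/N)$. Your argument, run with the correct entrywise rates, yields only the paper's intermediate bounds $e^{-\Omega((a+b)n\epsilon^2)}$ and $e^{-\Omega((a+b)\epsilon\min\{N\epsilon,n\})}$ (Eqs.~(\ref{eq:60}), (\ref{eq:62})), which are strictly weaker than the claim when $n\gg a+b$.

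The missing idea is a cancellation that is invisible at the level of individual entries. The paper telescopes the trace into $2nM_1=Z_{2n}-\sum_{i=2}^{2n}Z_{i-1}Z_i$ (Eq.~(\ref{eq:66})): in this form each $Z_i$ with large fluctuation (small Beta parameter $D_i:=a+b+(n-i/2)\beta$) is multiplied by a neighbor whose mean has magnitude $O((a+b)/D_i)$, so the product $Z_{i-1}Z_i$ acquires a sub-Gaussian proxy $O((a+b)^2/D_i^3)$, and these \emph{do} sum to $O(1)$ over $i$. Feeding these $i$-dependent rates into a Chernoff bound tailored to parameters of the form $r+is$ (the paper's Lemma~\ref{l:c2}) gives the complementary bounds $e^{-\Omega(n^2\epsilon^2)}$ (Eqs.~(\ref{eq:61}), (\ref{eq:63})), and the theorem follows by taking the better of the two regimes via $N=O(\max\{a+b,n\})$. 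An analogous decomposition (Eqs.~(\ref{eq:67})--(\ref{eq:defm})) is needed for $M_2$. Without this structure your Bernstein-over-a-dependency-graph scheme cannot close the gap between $a+b$ and $N$ in the exponent.
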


Let 
\begin{equation}
Y_1:=\frac1n\sum_{i=1}^ny_i,\quad Y_2:=\frac1n\sum_{i=1}^n(y_i-Y_1)^2
\end{equation}
be the mean and variance of the zeros of the Jacobi polynomial. From direct calculation (Appendix \ref{app}) we find that
\begin{equation} \label{eq:app}
Y_1=\frac{b-a}N,\quad Y_2=\frac{\beta(n-1)\big(2a+\beta(n-1)\big)\big(2b+\beta(n-1)\big)}{N^2(2N-\beta)}.
\end{equation}
Hence,
\begin{equation}
\e M^\textnormal{J}_1=Y_1,\quad\e M^\textnormal{J}_2=Y_2+O(1/N).
\end{equation}

\begin{corollary}
For any $\epsilon>0$,
\begin{equation}
\Pr(|M^\textnormal{J}_1-Y_1|>\epsilon)=O(e^{-\Omega(Nn\epsilon^2)}),\quad\Pr(|M^\textnormal{J}_2-Y_2|>\epsilon)=O(e^{-\Omega(N\epsilon)\min\{N\epsilon,n\}}).
\end{equation}
\end{corollary}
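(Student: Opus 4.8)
The plan is to deduce the final corollary directly from Theorem \ref{thm:moment} together with the comparison $\e M_1=Y_1$ and $\e M_2=Y_2+O(1/N)$. The first statement is immediate: since $\e M_1=Y_1$ exactly, the event $\{|M_1-Y_1|>\epsilon\}$ is \emph{identical} to $\{|M_1-\e M_1|>\epsilon\}$, so the bound $\Pr(|M_1-Y_1|>\epsilon)=O(e^{-\Omega(Nn\epsilon^2)})$ is just a restatement of the first half of (\ref{eq:20}). The work therefore lies entirely in the second moment, where $\e M_2$ and $Y_2$ differ by an $O(1/N)$ additive term rather than being equal.

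For the second moment, the idea is a triangle-inequality absorption of the $O(1/N)$ discrepancy into the exponent. Write $\e M_2=Y_2+R$ with $|R|\le c/N$ for a fixed constant $c$, and fix $\epsilon>0$. First I would dispose of the regime where $\epsilon$ is too small for the claimed bound to say anything: the conclusion $O(e^{-\Omega(N\epsilon)\min\{N\epsilon,n\}})$ is vacuous (the hidden constant can be taken $\ge1$) whenever $\epsilon=O(1/N)$, so it suffices to treat $\epsilon$ with $N\epsilon$ exceeding a suitable constant, in particular $\epsilon\ge 2c/N$, i.e. $|R|\le\epsilon/2$. In that regime the inclusion $\{|M_2-Y_2|>\epsilon\}\subseteq\{|M_2-\e M_2|>\epsilon-|R|\}\subseteq\{|M_2-\e M_2|>\epsilon/2\}$ holds, since $|M_2-Y_2|\le|M_2-\e M_2|+|R|$.

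Applying Theorem \ref{thm:moment} to the event on the right gives
\begin{equation}
\Pr(|M_2-Y_2|>\epsilon)\le\Pr\bigl(|M_2-\e M_2|>\epsilon/2\bigr)=O\bigl(e^{-\Omega(N\epsilon)\min\{N\epsilon,n\}}\bigr),
\end{equation}
where I have used that replacing $\epsilon$ by $\epsilon/2$ inside $e^{-\Omega(N\epsilon)\min\{N\epsilon,n\}}$ only changes the implicit constant in the $\Omega(\cdot)$, because both the prefactor $N\epsilon$ and the argument $\min\{N\epsilon,n\}$ scale linearly in $\epsilon$ (the $\min$ with $n$ is unaffected in its dependence on the constant). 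This recovers exactly the stated rate.

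I do not anticipate a genuine obstacle here; the corollary is a soft consequence of the theorem. The only point requiring care is bookkeeping of the implicit constants: one must check that the $O(1/N)$ gap between $\e M_2$ and $Y_2$, together with the halving of $\epsilon$, are both absorbable into the $\Omega$ and $O$ notation without altering the functional form of the bound. The small-$\epsilon$ case ($\epsilon=O(1/N)$) is the one place where the argument is not a pure rescaling, and it is handled by the observation that the bound is trivially true there. Everything else is a one-line triangle inequality.
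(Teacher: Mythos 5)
Your proposal is correct and is exactly the argument the paper intends: the corollary is stated without proof as an immediate consequence of Theorem \ref{thm:moment} together with $\e M_1=Y_1$ and $\e M_2=Y_2+O(1/N)$, with the $O(1/N)$ discrepancy absorbed by a triangle inequality (and the regime $\epsilon=O(1/N)$ being vacuous). Your handling of the constants and the small-$\epsilon$ case is careful and complete.
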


\section{Proofs} \label{s:p}

The proofs of Theorems \ref{thm:l3} and \ref{thm:j2} are similar to those of Theorems \ref{thm:l1} and \ref{thm:j1} in Refs.~\cite{DI07, DN09}, respectively, but the error analysis is improved and arguably simpler.

The following lemma will be used multiple times.

\begin{lemma} \label{l:diff}
Let $m$ be an integer and $p_i,q_i$ be numbers such that $|p_i-q_i|\le\delta$ for $i=1,2,\ldots,m$. Then,
\begin{equation}
\left|\prod_{i=1}^mp_i-\prod_{i=1}^mq_i\right|\le\delta\sum_{k=0}^{m-1}\prod_{i=1}^{m-k-1}|p_i|\prod_{j=m+1-k}^m|q_j|.
\end{equation}
\end{lemma}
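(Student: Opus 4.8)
The plan is to prove this by a standard telescoping argument: swap the factors one at a time, replacing $q_i$ by $p_i$, and control each individual swap using the hypothesis $|p_i-q_i|\le\delta$. Concretely, for $\ell=0,1,\ldots,m$ I would introduce the hybrid products
\begin{equation}
H_\ell:=\prod_{i=1}^{\ell}p_i\prod_{j=\ell+1}^mq_j,
\end{equation}
with the convention that an empty product equals $1$. Then $H_m=\prod_{i=1}^mp_i$ and $H_0=\prod_{i=1}^mq_i$, so the quantity of interest is the telescoping sum $\prod_{i=1}^mp_i-\prod_{i=1}^mq_i=H_m-H_0=\sum_{\ell=1}^m(H_\ell-H_{\ell-1})$.

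The key observation is that two consecutive hybrids differ only in their $\ell$-th factor: since $H_\ell$ and $H_{\ell-1}$ share the factors $p_1,\ldots,p_{\ell-1}$ and $q_{\ell+1},\ldots,q_m$, one gets
\begin{equation}
H_\ell-H_{\ell-1}=\left(\prod_{i=1}^{\ell-1}p_i\right)(p_\ell-q_\ell)\left(\prod_{j=\ell+1}^mq_j\right).
\end{equation}
Taking absolute values, applying the triangle inequality to the telescoping sum, and using $|p_\ell-q_\ell|\le\delta$ then yields
\begin{equation}
\left|\prod_{i=1}^mp_i-\prod_{i=1}^mq_i\right|\le\delta\sum_{\ell=1}^m\prod_{i=1}^{\ell-1}|p_i|\prod_{j=\ell+1}^m|q_j|.
\end{equation}

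Finally I would re-index by setting $k:=m-\ell$, so that $\ell=1,\ldots,m$ corresponds to $k=m-1,\ldots,0$; this turns $\prod_{i=1}^{\ell-1}|p_i|$ into $\prod_{i=1}^{m-k-1}|p_i|$ and $\prod_{j=\ell+1}^m|q_j|$ into $\prod_{j=m+1-k}^m|q_j|$, which is exactly the claimed right-hand side. The only real obstacle here is bookkeeping rather than analysis: one must keep the index ranges and the empty-product conventions straight (for instance, the $k=0$ term has an empty $q$-product and the $k=m-1$ term has an empty $p$-product), but no further estimates are needed.
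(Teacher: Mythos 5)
Your proof is correct and is essentially identical to the paper's: the paper also telescopes through the hybrid products $\prod_{i=1}^{m-k}p_i\prod_{j=m+1-k}^m q_j$ (your $H_{m-k}$), bounds each consecutive difference by $\delta$ times the product of the remaining factors in absolute value, and sums. The only difference is that the paper indexes the telescoping sum by $k$ from the start, whereas you index by $\ell$ and re-index at the end.
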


\begin{proof}
\begin{equation}
\left|\prod_{i=1}^mp_i-\prod_{i=1}^mq_i\right|\le\sum_{k=0}^{m-1}\left|\prod_{i=1}^{m-k}p_i\prod_{j=m+1-k}^mq_j-\prod_{i=1}^{m-k-1}p_i\prod_{j=m-k}^mq_j\right|\le\delta\sum_{k=0}^{m-1}\prod_{i=1}^{m-k-1}|p_i|\prod_{j=m+1-k}^m|q_j|.
\end{equation}
\end{proof}

Let $C$ be a positive constant. For notational simplicity, we will reuse $C$ in that its value may be \emph{different} in different expressions or equations.

\subsection{Laguerre ensemble: Proofs of Theorem \ref{thm:l3} and Fact \ref{fact:M1L}}\label{sec:laguerre-proof}

For Theorem \ref{thm:l3}, it suffices to prove

\begin{theorem} \label{thm:7}
For any $\epsilon>0$,
\begin{equation}
\Pr\left(\frac1{2\alpha}\max_{1\le i\le n}|\lambda_i-x_i|>4\epsilon\right)\le4ne^{-\alpha(\sqrt{1+\epsilon}-1)^2}.
\end{equation}
\end{theorem}

\begin{proof} [Proof of Theorem \ref{thm:7}]
Let $X_{2\alpha},X_{2\alpha-\beta},X_{2\alpha-2\beta},\ldots,X_{2\alpha-(n-1)\beta},Y_\beta,Y_{2\beta},\ldots,Y_{(n-1)\beta}$ be independent non-negative random variables with $X_k^2\sim\chi^2_k$ and $Y_l^2\sim\chi^2_l$. Note that
\begin{equation}
\e(X_k^2)=k,\quad\Var(X_k^2)=2k.
\end{equation}
Lemma A.1 in Ref.~\cite{DI07} gives the tail bound ($\delta$ here and in all probability bounds below is positive)
\begin{equation} \label{eq:ctr}
\Pr(|X_k-\sqrt k|>\delta)\le2(1+\delta/\sqrt k)^ke^{-\delta\sqrt k-\delta^2/2}\le2e^{-\delta^2/2}.
\end{equation}

Let $\mathbf L_{i,j}$ be the element in the $i$th row and $j$th column of a real symmetric $n\times n$ tridiagonal random matrix $\mathbf L$. ``Tridiagonal'' means that $\mathbf L_{i,j}=0$ if $|i-j|>1$. The diagonal and subdiagonal matrix elements are, respectively,
\begin{gather}
\mathbf L_{1,1}=X_{2\alpha}^2,\label{eq:bl1}\\
\mathbf L_{i,i}=X_{2\alpha-(i-1)\beta}^2+Y_{(n+1-i)\beta}^2,\quad i=2,3,\ldots,n,\label{eq:bl2}\\
\mathbf L_{i+1,i}=X_{2\alpha-(i-1)\beta}Y_{(n-i)\beta},\quad i=1,2,\ldots,n-1.\label{eq:al}
\end{gather}
The joint eigenvalue distribution of $\mathbf L$ is given by \cite{DE02} the Laguerre ensemble (Definition \ref{def:le}).

Let $\mathbf L'$ be a real symmetric $n\times n$ tridiagonal deterministic matrix, whose matrix elements are obtained by replacing $X_k^2,Y_l^2$ in Eqs. (\ref{eq:bl1}), (\ref{eq:bl2}) by their expectation values and replacing $X_kY_l$ in Eq.~(\ref{eq:al}) by $\sqrt{\e(X_k^2)\e(Y_l^2)}$, i.e.,
\begin{gather}
    \mathbf L'_{1,1}=2\alpha,\\
    \mathbf L'_{i,i}=2\alpha+(n+2-2i)\beta,\quad i=2,3,\ldots,n,\\
    \mathbf L'_{i+1,i}=\sqrt{\big(2\alpha-(i-1)\beta\big)(n-i)\beta},\quad i=1,2,\ldots,n-1.
\end{gather}
The eigenvalues of $\mathbf L'$ are the zeros of the Laguerre polynomial $L_n^{(2a/\beta-n)}(x/\beta)$ \cite{DI07}.

Let $\|\cdot\|$ denote the operator norm. Let $\mathbf L_{1,0}=\mathbf L'_{1,0}=\mathbf L_{n+1,n}=\mathbf L'_{n+1,n}:=0$. Let $\delta=\sqrt{2\alpha}(\sqrt{1+\epsilon}-1)$. Since
\begin{equation}
    \max_{1\le i\le n}|\lambda_i-x_i|\le\|\mathbf L-\mathbf L'\|\le\max_{1\le i\le n}\{|\mathbf L_{i,i-1}-\mathbf L'_{i,i-1}|+|\mathbf L_{i,i}-\mathbf L'_{i,i}|+|\mathbf L_{i+1,i}-\mathbf L'_{i+1,i}|\},
\end{equation}
it suffices to show that
\begin{equation}
|\mathbf L_{i,i}-\mathbf L'_{i,i}|\le4\alpha\epsilon,\quad|\mathbf L_{i+1,i}-\mathbf L'_{i+1,i}|\le2\alpha\epsilon,\quad\forall i
\end{equation}
under the assumptions that
\begin{equation} \label{eq:40}
|X_k-\sqrt k|\le\delta,\quad|Y_l-\sqrt l|\le\delta,\quad\forall k,l.
\end{equation}
Indeed, (\ref{eq:40}) and Lemma \ref{l:diff} with $m=2$ imply that for any $k,l\le2\alpha$,
\begin{gather}
|X_k^2-k|\le\delta(2\sqrt k+\delta)\le\delta(2\sqrt{2\alpha}+\delta),\\
|X_kY_l-\sqrt{kl}|\le\delta(\sqrt k+\sqrt l+\delta)\le\delta(2\sqrt{2\alpha}+\delta)=2\alpha\epsilon.
\end{gather}
\end{proof}

\begin{proof} [Proof of \Cref{fact:M1L}]
Using the matrix model (\ref{eq:bl1}), (\ref{eq:bl2}), (\ref{eq:al}) from Ref.~\cite{DE02}, we find that
  \begin{equation}
    M_1^\textnormal{L}\sim\frac 1n\sum_{i=1}^n \mathbf L_{i,i} =\frac1n \sum_{i=1}^n X_{2\alpha - (i-1)\beta}^2 + \frac1n\sum_{i=2}^nY_{(n+1-i)\beta}^2
    \sim \frac 1n \chi^2_{2\alpha n}.
  \end{equation}
\end{proof}

\subsection{Jacobi ensemble}

For $k,l>0$, let $Z\sim B(k,l)$ denote a beta-distributed random variable on the interval $[-1,1]$ with probability density function
\begin{equation}
    f_\textnormal{beta}(z)\propto(1-z)^{k-1}(1+z)^{l-1}
\end{equation}
so that
\begin{equation}
    \e Z=\frac{l-k}{k+l}.
\end{equation}
Assume without loss of generality that $k\ge l$. Theorem 8 in Ref.~\cite{ZZ20} gives the tail bound
\begin{equation} \label{eq:btail}
\Pr(Z>\e Z+\delta)\le2e^{-C\min\left\{\frac{k^2\delta^2}{l},k\delta\right\}},\quad\Pr(Z<\e Z-\delta)\le2e^{-\frac{Ck^2\delta^2}{l}}.
\end{equation}
Note that $\Pr(Z>\e Z+\delta)=0$ for $\delta\ge1-\e Z$. In this case, the first inequality above holds trivially. The tail bound (\ref{eq:btail}) implies that
\begin{gather} 
\Pr(|Z-\e Z|>\delta)\le4e^{-Ck\delta^2},\label{eq:btail1}\\
\Pr(Z>\e Z+2\delta\sqrt{1+\e Z}+\delta^2)\le2e^{-Ck\delta^2},\quad\forall\delta>0.\label{eq:btail2}
\end{gather}
Furthermore, for $0<\delta<\sqrt{1+\e Z}$,
\begin{equation} \label{eq:btail3}
\Pr(Z<\e Z-2\delta\sqrt{1+\e Z}+\delta^2)\le 2e^{-Ck\delta^2}.
\end{equation}
(\ref{eq:btail2}) and (\ref{eq:btail3}) imply that
\begin{equation} \label{eq:btail4}
\Pr(|\sqrt{1+Z}-\sqrt{1+\e Z}|>\delta)\le4e^{-Ck\delta^2}.
\end{equation}
Similarly,
\begin{equation} \label{eq:btail5}
\Pr(|\sqrt{1-Z}-\sqrt{1-\e Z}|>\delta)\le4e^{-Ck\delta^2}.
\end{equation}

\subsubsection{Pointwise approximation: Proof of Theorem \ref{thm:j2}}

Let $Z_2,Z_3,Z_4,\ldots,Z_{2n}$ be independent random variables with distribution
\begin{equation}
    Z_i\sim
    \begin{cases}
    B\big(a+(2n-i)\beta/4,b+(2n-i)\beta/4\big),\quad\textnormal{even}~i\\
    B\big(a+b+(2n-1-i)\beta/4,(2n+1-i)\beta/4\big),\quad\textnormal{odd}~i
    \end{cases}
\end{equation}
so that
\begin{equation} \label{eq:zm}
    \e Z_i=\frac1{a+b+(n-i/2)\beta}\times
    \begin{cases}
    b-a,\quad\textnormal{even}~i\\
    \beta/2-a-b,\quad\textnormal{odd}~i
    \end{cases}.
\end{equation}
Let $Z_1:=-1$.

Let $\mathbf J_{i,j}$ be the element in the $i$th row and $j$th column of a real symmetric $n\times n$ tridiagonal random matrix $\mathbf J$. The diagonal and subdiagonal matrix elements are, respectively,
\begin{equation} \label{eq:ab}
\mathbf J_{i,i}=(1-Z_{2i-1})Z_{2i}-(1+Z_{2i-1})Z_{2i-2},\quad\mathbf J_{i+1,i}=\sqrt{(1-Z_{2i-1})(1-Z_{2i}^2)(1+Z_{2i+1})}.
\end{equation}
The joint eigenvalue distribution of $\mathbf J/2$ is given by \cite{KN04} the Jacobi ensemble (Definition \ref{def:je}).

Let $\mathbf J'$ be a real symmetric $n\times n$ tridiagonal deterministic matrix, whose matrix elements are obtained by replacing every random variable $Z_i$ in (\ref{eq:ab}) by $\e Z_i$, i.e.,
\begin{gather}
    \mathbf J'_{i,i}=(1-\e Z_{2i-1})\e Z_{2i}-(1+\e Z_{2i-1})\e Z_{2i-2},\label{eq:50}\\
    \mathbf J'_{i+1,i}=\sqrt{(1-\e Z_{2i-1})\big(1-(\e Z_{2i})^2\big)(1+\e Z_{2i+1})}.\label{eq:51}
\end{gather}
The eigenvalues of $\mathbf J'/2$ are the zeros of the Jacobi polynomial $P_n^{2a/\beta-1,2b/\beta-1}(x)$ \cite{DN09}.

Let $\mathbf J_{1,0}=\mathbf J'_{1,0}=\mathbf J_{n+1,n}=\mathbf J'_{n+1,n}:=0$. Using (\ref{eq:btail1}), (\ref{eq:btail4}), (\ref{eq:btail5}) and since
\begin{equation}
    \max_{1\le i\le n}|\mu_i-y_i|\le\|\mathbf J-\mathbf J'\|/2\le\max_{1\le i\le n}\{|\mathbf J_{i,i-1}-\mathbf J'_{i,i-1}|+|\mathbf J_{i,i}-\mathbf J'_{i,i}|+|\mathbf J_{i+1,i}-\mathbf J'_{i+1,i}|\}/2,
\end{equation}
it suffices to show that
\begin{gather}
    |\mathbf J_{i,i}-\mathbf J'_{i,i}|\le C\epsilon,\quad\forall i,\label{eq:top1}\\
    |\mathbf J_{i+1,i}-\mathbf J'_{i+1,i}|\le C\epsilon,\quad\forall i\label{eq:top2}
\end{gather}
under the assumptions that
\begin{gather}
    |Z_i-\e Z_i|\le\epsilon,\quad\forall i,\label{eq:err1}\\
    |\sqrt{1+Z_i}-\sqrt{1+\e Z_i}|\le\epsilon,\quad|\sqrt{1-Z_i}-\sqrt{1-\e Z_i}|\le\epsilon,\quad\forall i.\label{eq:err2}
\end{gather}
(\ref{eq:top1}) follows from (\ref{eq:err1}) and Lemma \ref{l:diff} with $m=2$. (\ref{eq:top2}) follows from (\ref{eq:err2}) and Lemma \ref{l:diff} with $m=4$.

\subsubsection{Moments: Proof of Theorem \ref{thm:moment}}

Since $N=O(\max\{a+b,n\})$, it suffices to prove that
\begin{gather}
\Pr(|M^\textnormal{J}_1-\e M^\textnormal{J}_1|>\epsilon)=O(e^{-\Omega(a+b)n\epsilon^2}),\label{eq:60}\\
\Pr(|M^\textnormal{J}_1-\e M^\textnormal{J}_1|>\epsilon)=O(e^{-\Omega(n^2\epsilon^2)}),\label{eq:61}\\
\Pr(|M^\textnormal{J}_2-\e M^\textnormal{J}_2|>\epsilon)=O(e^{-\Omega(a+b)\epsilon\min\{N\epsilon,n\}}),\label{eq:62}\\
\Pr(|M^\textnormal{J}_2-\e M^\textnormal{J}_2|>\epsilon)=O(e^{-\Omega(n^2\epsilon^2)}).\label{eq:63}
\end{gather}

We follow the proof of Theorem \ref{thm:j2} and use the same notation. We have proved that
\begin{gather}
\Pr(|\mathbf J_{i,i}-\mathbf J'_{i,i}|>\delta)=O(e^{-\Omega(a+b)\delta^2}),\quad\forall i,\label{eq:54}\\
\Pr(|\mathbf J_{i+1,i}-\mathbf J'_{i+1,i}|>\delta)=O(e^{-\Omega(a+b)\delta^2}),\quad\forall i.\label{eq:65}
\end{gather}
Let $I_n$ be the identity matrix of order $n$. A straightforward calculation using (\ref{eq:ab}) yields
\begin{align}
M^\textnormal{J}_1&=\frac1n\tr\frac{\mathbf J}2=\frac1{2n}\sum_{i=1}^n\mathbf J_{i,i}=\frac1{2n}\left(Z_{2n}-\sum_{i=2}^{2n}Z_{i-1}Z_i\right),\label{eq:66}\\
M^\textnormal{J}_2&=\frac1n\tr((\mathbf J/2-Y_1I_n)^2)=\frac1n\sum_{i=1}^n(\mathbf J_{i,i}/2-Y_1)^2+\frac1{2n}\sum_{i=1}^{n-1}\mathbf J_{i+1,i}^2\label{eq:67}\\
&=Y_1^2-2Y_1M^\textnormal{J}_1+\frac12+\frac{2Z_{2n-1}(1-Z_{2n}^2)+Z_2^2+Z_{2n}^2}{4n}+M',\label{eq:68}
\end{align}
where
\begin{equation} \label{eq:defm}
M':=\frac1{4n}\sum_{i=3}^{2n}\big(2Z_{i-2}(Z_{i-1}^2-1)Z_i+Z_{i-1}^2Z_i^2\big).
\end{equation}

We will use the Chernoff bound multiple times.

\begin{lemma} \label{l:c}
Let $W_1,W_2,\ldots,W_n$ be independent real-valued random variables such that
\begin{equation} \label{eq:tail}
\e W_i=0,\quad\Pr(|W_i|>x)=O\left(e^{-\min\left\{\frac xr,\frac{x^2}{s^2}\right\}}\right),\quad\forall i
\end{equation}
for some $r,s>0$. Then,
\begin{equation}
\Pr\left(\left|\frac1n\sum_{i=1}^nW_i\right|>\delta\right)=O\left(e^{-\Omega(n)\min\left\{\frac\delta r,\frac{\delta^2}{r^2+s^2}\right\}}\right).
\end{equation}
\end{lemma}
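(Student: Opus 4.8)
The plan is to run the standard Chernoff argument, so that the whole proof reduces to controlling the moment generating function (MGF) of a single $W_i$ for small values of the exponential parameter. First I would convert the tail hypothesis into a moment bound. Writing $K$ for the implicit constant in (\ref{eq:tail}) and integrating the tail,
\[
\e|W_i|^k=\int_0^\infty kx^{k-1}\Pr(|W_i|>x)\,dx\le K\int_0^\infty kx^{k-1}e^{-\min\{x/r,\,x^2/s^2\}}\,dx,
\]
I would split the integral at the crossover point $x=s^2/r$. Extending the sub-Gaussian piece ($x\le s^2/r$, where the exponent is $x^2/s^2$) to all of $[0,\infty)$ and substituting $u=x^2/s^2$ gives a contribution of order $s^k(k/2)!$; extending the sub-exponential piece ($x\ge s^2/r$, where the exponent is $x/r$) likewise gives a contribution of order $r^k k!$. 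Hence there is a constant $C$ with $\e|W_i|^k\le C\big(r^k k!+s^k(k/2)!\big)$ for all $k\ge1$.

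Next I would feed these moment bounds into the Taylor expansion of the MGF. Since $\e W_i=0$ the linear term drops, and bounding $|\e W_i^k|\le\e|W_i|^k$ leaves two series,
\[
\e e^{\lambda W_i}\le 1+C\sum_{k\ge2}(\lambda r)^k+C\sum_{k\ge2}(\lambda s)^k\frac{(k/2)!}{k!}.
\]
The first (geometric) series converges precisely when $|\lambda|r<1$ and has leading order $\lambda^2 r^2$; the second converges for every $\lambda$ because $(k/2)!/k!$ decays super-geometrically, with leading order $\lambda^2 s^2$. Therefore there is a constant $c>0$ such that for all $|\lambda|\le c/r$,
\[
\e e^{\lambda W_i}\le 1+C\lambda^2(r^2+s^2)\le e^{C\lambda^2(r^2+s^2)}.
\]
Because the hypothesis controls $|W_i|$ rather than $W_i$, this estimate holds for both signs of $\lambda$, which is exactly what lets me treat the upper and lower tails on equal footing.

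With the MGF in hand, independence gives, for every $0\le\lambda\le c/r$,
\[
\Pr\Big(\tfrac1n\textstyle\sum_{i=1}^n W_i>\delta\Big)\le e^{-\lambda n\delta}\prod_{i=1}^n\e e^{\lambda W_i}\le e^{-\lambda n\delta+Cn\lambda^2(r^2+s^2)}.
\]
The last step is to optimize over $\lambda$ subject to $\lambda\le c/r$. The unconstrained optimum is $\lambda^\star=\delta/\big(2C(r^2+s^2)\big)$. When $\lambda^\star\le c/r$ (small $\delta$) I take $\lambda=\lambda^\star$, obtaining the sub-Gaussian exponent $-\Omega(n)\delta^2/(r^2+s^2)$; when $\lambda^\star>c/r$ (large $\delta$) I take the boundary value $\lambda=c/r$, and since $\delta$ then exceeds a fixed multiple of $(r^2+s^2)/r$ the linear term dominates the quadratic correction, yielding $-\Omega(n)\delta/r$. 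Taking the worse of the two cases produces the claimed $\min\{\delta/r,\,\delta^2/(r^2+s^2)\}$, and the identical bound for the lower tail follows by applying the same argument to $-W_i$.

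I expect the only delicate point to be the moment-to-MGF passage: one must keep the two tail regimes separate so that the variance proxy emerges as $r^2+s^2$ while the radius of validity $|\lambda|\le c/r$ of the exponential bound is dictated by $r$ alone. Everything downstream of that — the product over independent coordinates and the two-regime optimization — is routine bookkeeping.
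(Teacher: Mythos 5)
Your proof is correct and follows essentially the same route as the paper's: integrate the tail to get moment bounds of order $r^kk!+s^k(k/2)!$, sum the Taylor series of the MGF to obtain $\e e^{\lambda W_i}\le e^{C\lambda^2(r^2+s^2)}$ for $|\lambda|\le c/r$, and run the Chernoff argument with the same two-regime choice of $\lambda$. (One cosmetic point: when $\lambda s\gg1$ the intermediate inequality $\e e^{\lambda W_i}\le 1+C\lambda^2(r^2+s^2)$ fails because the sub-Gaussian series sums to roughly $e^{C\lambda^2 s^2}$ rather than $O(\lambda^2s^2)$, but the final exponential bound $e^{C\lambda^2(r^2+s^2)}$ still holds, which is exactly the form the paper uses.)
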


Each $W_i$ is a subexponential random variable in that its probability distribution satisfies (\ref{eq:tail}). Thus, Lemma \ref{l:c} is the Chernoff bound for subexponential random variables. For $r=0^+$, $W_i$ becomes a sub-Gaussian random variable, and Lemma \ref{l:c} reduces to the Chernoff bound for sub-Gaussian random variables.

\begin{proof} [Proof of Lemma \ref{l:c}]
The tail bound (\ref{eq:tail}) implies that for any $j>0$,
\begin{multline}
\e(|W_i|^j)=\int_0^\infty\Pr(|W_i|^j>x)\,\mathrm dx=\int_0^\infty jx^{j-1}\Pr(|W_i|>x)\,\mathrm dx\\
=\int_0^\infty jx^{j-1}O(e^{-x/r}+e^{-x^2/s^2})\,\mathrm dx=O\big(r^j\Gamma(j+1)+s^j\Gamma(j/2+1)\big).
\end{multline}
Let $t$ be such that $0<t\le1/(2r)$. Since $\e W_i=0$,
\begin{equation}
\e e^{tW_i}=1+\sum_{j=2}^\infty\frac{t^j\e(W_i^j)}{j!}=1+\sum_{j=2}^\infty O\left((rt)^j+\frac{(st)^j\Gamma(j/2+1)}{j!}\right).
\end{equation}
Using $(st)^j\le(st)^{j-1}+(st)^{j+1}$ for odd $j$,
\begin{multline} \label{eq:mgf}
\e e^{tW_i}=1+\frac{O(rt)^2}{1-rt}+\sum_{j=1}^\infty(st)^{2j}O\left(\frac{\Gamma(j+1/2)}{(2j-1)!}+\frac{j!}{(2j)!}+\frac{\Gamma(j+3/2)}{(2j+1)!}\right)\\
=1+O(rt)^2+O(1)\sum_{j=1}^\infty\frac{(st)^{2j}}{j!}\le e^{c(r^2+s^2)t^2},
\end{multline}
where $c>0$ is a constant. Recall the standard Chernoff argument:
\begin{equation} \label{eq:ca}
\Pr\left(\frac1n\sum_{i=1}^nW_i>\delta\right)=\Pr(e^{t\sum_{i=1}^nW_i}>e^{nt\delta})\le e^{-nt\delta}\e e^{t\sum_{i=1}^nW_i}=\prod_{i=1}^n\e e^{tW_i-t\delta}.
\end{equation}
If $\delta\le c(r^2+s^2)/r$, we choose $t=\frac\delta{2c(r^2+s^2)}$ so that
\begin{equation}
\e e^{tW_i-t\delta}\le e^{-\frac{\delta^2}{4c(r^2+s^2)}}.
\end{equation}
If $\delta>c(r^2+s^2)/r$, we choose $t=1/(2r)$ so that
\begin{equation}
\e e^{tW_i-t\delta}\le e^{\frac{c(r^2+s^2)}{4r^2}-\frac{\delta}{2r}}\le e^{-\frac\delta{4r}}.
\end{equation}
We complete the proof by combining these two cases.
\end{proof}

\begin{lemma} \label{l:c2}
Let $W_1,W_2,\ldots,W_n$ be independent random variables on the interval $[-1,1]$ such that
\begin{equation} \label{eq:tail'}
\e W_i=0,\quad\Pr(|W_i|>x)=O\left(e^{-\min\left\{(r+is)x,\frac{(r+is)^3x^2}{r^2}\right\}}\right),\quad\forall i
\end{equation}
for some $r,s=\Omega(1)$. Then,
\begin{equation}
\Pr\left(\left|\frac1n\sum_{i=1}^nW_i\right|>\delta\right)=O(e^{-\Omega(n^2\delta^2)}).
\end{equation}
\end{lemma}

\begin{proof}
For $i\ge(2t-r)/s$, by replacing (\ref{eq:tail}) with (\ref{eq:tail'}), (\ref{eq:mgf}) implies that
\begin{equation}
\e e^{tW_i}=e^{\frac{O(t^2)}{(r+is)^2}+\frac{O(r^2t^2)}{(r+is)^3}}.
\end{equation}
Since $|W_i|\le1$, we trivially have
\begin{equation}
\e e^{tW_i}\le e^t.
\end{equation}
The Chernoff argument (\ref{eq:ca}) implies that
\begin{multline}
\Pr\left(\frac1n\sum_{i=1}^nW_i>\delta\right)\le e^{-nt\delta}\prod_{i=1}^n\e e^{tW_i}\le e^{-nt\delta}\prod_{1\le i<\frac{2t-r}s}e^t\times\prod_{\max\left\{\frac{2t-r}s,1\right\}\le i\le n}e^{\frac{O(t^2)}{(r+is)^2}+\frac{O(r^2t^2)}{(r+is)^3}}\\
\le e^{-nt\delta+O(t^2)+\sum_{i=1}^\infty\frac{O(t^2)}{(r+is)^2}+\frac{O(r^2t^2)}{(r+is)^3}}=e^{O(t^2)-nt\delta}.
\end{multline}
We complete the proof by choosing $t=c'n\delta$ for a sufficiently small constant $c'>0$.
\end{proof}

\begin{proof} [Proof of Eq.~(\ref{eq:60})]
Using Eq.~(\ref{eq:54}) and Lemma \ref{l:c} with $r=0^+$,
\begin{gather}
\Pr\left(\left|\frac1n\sum_{\textnormal{even}~i}(\mathbf J_{i,i}-\mathbf J'_{i,i})\right|>\epsilon\right)=O(e^{-\Omega(a+b)n\epsilon^2}),\\
\Pr\left(\left|\frac1n\sum_{\textnormal{odd}~i}(\mathbf J_{i,i}-\mathbf J'_{i,i})\right|>\epsilon\right)=O(e^{-\Omega(a+b)n\epsilon^2}).
\end{gather}
Then, Eq.~(\ref{eq:60}) follows from Eq.~(\ref{eq:66}) and the union bound.
\end{proof}

\begin{proof} [Proof of Eq.~(\ref{eq:61})]
The tail bound (\ref{eq:btail1}) implies that
\begin{equation} \label{eq:btail6}
\Pr(|Z_i-\e Z_i|>\delta)=O(e^{-\Omega(a+b+(n-i/2)\beta)\delta^2})
\end{equation}
so that
\begin{equation}
\Pr(|Z_{i-1}Z_i-\e Z_{i-1}\cdot\e Z_i|>\delta)=O\left(e^{-\Omega(a+b+(n-i/2)\beta)\delta\min\left\{\frac{(a+b+(n-i/2)\beta)^2\delta}{(a+b)^2},1\right\}}\right).
\end{equation}
Using Lemma \ref{l:c2},
\begin{gather}
\Pr\left(\left|\frac1n\sum_{\textnormal{even}~i}(Z_{i-1}Z_i-\e Z_{i-1}\cdot\e Z_i)\right|>\epsilon\right)=O(e^{-\Omega(n^2\epsilon^2)}),\\
\Pr\left(\left|\frac1n\sum_{\textnormal{odd}~i}(Z_{i-1}Z_i-\e Z_{i-1}\cdot\e Z_i)\right|>\epsilon\right)=O(e^{-\Omega(n^2\epsilon^2)}).
\end{gather}
Then, Eq.~(\ref{eq:61}) follows from Eq.~(\ref{eq:66}) and the union bound.
\end{proof}

\begin{proof} [Proof of Eq.~(\ref{eq:62})]
Equations (\ref{eq:zm}), (\ref{eq:ab}), (\ref{eq:50}), (\ref{eq:51}) imply that
\begin{gather}
|\mathbf J'_{i,i}/2-Y_1|=O(n/N),\quad|\mathbf J'_{i+1,i}|=O(\sqrt{n/N}),\quad\forall i,\label{eq:59}\\
|\e((\mathbf J_{i,i}/2-Y_1)^2)-(\mathbf J'_{i,i}/2-Y_1)^2|=\frac{O(1)}{a+b},\quad|\e(\mathbf J^2_{i+1,i})-\mathbf J'^2_{i+1,i}|=\frac{O(n)}{(a+b)N},\quad\forall i.\label{eq:71}
\end{gather}
Equations (\ref{eq:54}), (\ref{eq:65}), (\ref{eq:59}) imply that
\begin{gather}
\Pr\big(|(\mathbf J_{i,i}/2-Y_1)^2-(\mathbf J'_{i,i}/2-Y_1)^2|>\delta\big)=O(e^{-\Omega(a+b)\delta\min\{N^2\delta/n^2,1\}}),\quad\forall i,\\
\Pr(|\mathbf J_{i+1,i}^2-\mathbf J'^2_{i+1,i}|>\delta)=O(e^{-\Omega(a+b)\delta\min\{N\delta/n,1\}}),\quad\forall i.
\end{gather}
Using Eq.~(\ref{eq:71}),
\begin{gather}
\Pr\big(|(\mathbf J_{i,i}/2-Y_1)^2-\e((\mathbf J_{i,i}/2-Y_1)^2)|>\delta\big)=O(e^{-\Omega(a+b)\delta\min\{N^2\delta/n^2,1\}}),\quad\forall i,\\
\Pr\big(|\mathbf J_{i+1,i}^2-\e(\mathbf J^2_{i+1,i})|>\delta\big)=O(e^{-\Omega(a+b)\delta\min\{N\delta/n,1\}}),\quad\forall i.
\end{gather}
Using Lemma \ref{l:c},
\begin{gather}
\Pr\left(\left|\frac1n\sum_{\textnormal{even}~i}\big((\mathbf J_{i,i}/2-Y_1)^2-\e((\mathbf J_{i,i}/2-Y_1)^2)\big)\right|>\epsilon\right)=O(e^{-\Omega(a+b)\epsilon\min\{N\epsilon,n\}}),\\
\Pr\left(\left|\frac1n\sum_{\textnormal{odd}~i}\big((\mathbf J_{i,i}/2-Y_1)^2-\e((\mathbf J_{i,i}/2-Y_1)^2)\big)\right|>\epsilon\right)=O(e^{-\Omega(a+b)\epsilon\min\{N\epsilon,n\}}),\\
\Pr\left(\left|\frac1n\sum_{\textnormal{even}~i}\big(\mathbf J_{i+1,i}^2-\e(\mathbf J^2_{i+1,i})\big)\right|>\epsilon\right)=O(e^{-\Omega(a+b)\epsilon\min\{N\epsilon,n\}}),\\
\Pr\left(\left|\frac1n\sum_{\textnormal{odd}~i}\big(\mathbf J_{i+1,i}^2-\e(\mathbf J^2_{i+1,i})\big)\right|>\epsilon\right)=O(e^{-\Omega(a+b)\epsilon\min\{N\epsilon,n\}}).
\end{gather}
Then, Eq.~(\ref{eq:62}) follows from Eq.~(\ref{eq:67}) and the union bound.
\end{proof}

\begin{proof} [Proof of Eq.~(\ref{eq:63})]
The tail bound (\ref{eq:btail6}) implies that
\begin{multline}
\Pr\big(|2Z_{i-2}(Z_{i-1}^2-1)Z_i+Z_{i-1}^2Z_i^2-\e(2Z_{i-2}(Z_{i-1}^2-1)Z_i+Z_{i-1}^2Z_i^2)|>\delta\big)\\
=O\left(e^{-\Omega(a+b+(n-i/2)\beta)\delta\min\left\{\frac{(a+b+(n-i/2)\beta)^2\delta}{(a+b)^2},1\right\}}\right).
\end{multline}
Recall the definition (\ref{eq:defm}) of $M'$. It can be proved in the same way as Eq.~(\ref{eq:61}) that
\begin{equation} \label{eq:100}
\Pr(|M'-\e M'|>\epsilon)=O(e^{-\Omega(n^2\epsilon^2)}).
\end{equation}
Equation (\ref{eq:63}) follows from Eqs. (\ref{eq:61}), (\ref{eq:68}), (\ref{eq:100}) and the union bound.
\end{proof}

\section*{Acknowledgments}

This material is based upon work supported by the U.S. Department of Energy, Office of Science, National Quantum Information Science Research Centers, Quantum Systems Accelerator.  AWH was also supported by NSF grants CCF-1729369 and PHY-1818914 and NTT (Grant AGMT DTD 9/24/20).

\appendix

\section{Proof of Eq.~(\ref{eq:app})} \label{app}

We write the Jacobi polynomial (\ref{eq:jacpoly}) as
\begin{equation}
P_n^{p,q}(y)=\frac{\Gamma(p+q+2n+1)}{2^nn!\Gamma(p+q+n+1)}\left(y^n+\sum_{j=0}^{n-1}c_jy^j\right).
\end{equation}
Let $p=2a/\beta-1$ and $q=2b/\beta-1$. From direct calculation we find that
\begin{align}
c_{n-1}&=\frac{2n(p+n)}{p+q+2n}-n=\frac{n(a-b)}{N},\\
c_{n-2}&=n(n-1)\left(\frac12-\frac{2(p+n)}{p+q+2n}+\frac{2(p+n)(p+n-1)}{(p+q+2n)(p+q+2n-1)}\right)\nonumber\\
&=\frac{n(n-1)\big(2(a-b)^2-\beta N\big)}{2N(2N-\beta)}.
\end{align}
Hence,
\begin{align}
Y_1&=-c_{n-1}/n=(b-a)/N,\\
Y_2&=-Y_1^2+\frac1n\sum_{j=1}^ny_j^2=\frac1n\left(\sum_{j=1}^ny_j\right)^2-Y_1^2-\frac1n\sum_{j\neq k}y_jy_k=(n-1)Y_1^2-\frac{2c_{L-2}}n\nonumber\\
&=\beta(n-1)(1-Y_1^2)/(2N-\beta).
\end{align}

\section{Moments of the Hermite ensemble}
\label{app:more-moments}

Fact \ref{fact:M1L} and Theorem \ref{thm:moment} concern the moments of the Laguerre and Jacobi ensembles, respectively.  For the Hermite ensemble, it is simple to calculate the distributions of the first and second moments exactly. The results are presented here for completeness.

\begin{definition} [Hermite ensemble]
The probability density function of the $\beta$-Hermite ensemble is
\begin{equation} \label{eq:Hermite}
  f_\textnormal{Herm}(\nu_1,\nu_2,\ldots,\nu_n)
  \propto{\prod_{1\le i<j\le n}|\nu_i-\nu_j|^\beta}\prod_{i=1}^ne^{-\nu_i^2/2}.
\end{equation}
\end{definition}

For $\beta=1,2,4$, the Hermite ensemble gives the probability density function of the eigenvalues of an $n\times n$ self-adjoint matrix whose entries are real, complex, or quaternionic Gaussian random variables.

Let
\begin{equation}
  M_1^\textnormal{H}:=\frac1n\sum_{i=1}^n\nu_i,\quad M_2^\textnormal{H}:=\frac1n\sum_{i=1}^n(\nu_i-\e M_1^\textnormal{H})^2 =
  \frac1n\sum_{i=1}^n\nu_i^2
\end{equation}
be the first and second moments of the Hermite ensemble, where we used the fact that $\e M_1^\textnormal{H}=0$.

\begin{fact}
$M_1^\textnormal{H}$ is distributed as $\mathcal N(0,1/n)$, where $\mathcal N(0,\sigma^2)$ denotes the normal distribution with mean $0$ and variance $\sigma^2$. $M_2^\textnormal{H}$ is distributed as $\frac 1n \chi_{n+\beta n(n-1)/2}^2$.
\end{fact}

\begin{proof}
Let $g_1,g_2,\ldots,g_n,X_\beta,X_{2\beta},\ldots,X_{(n-1)\beta}$ be independent random variables with
\begin{equation}
g_i\sim\mathcal N(0,1),\quad X_k^2\sim\chi_k^2,\quad X_k\ge0.
\end{equation}
The eigenvalues of the real symmetric $n\times n$ tridiagonal random matrix
\begin{equation}
\mathbf H = \frac1{\sqrt2}
  \begin{pmatrix}
  \sqrt2g_1 & X_\beta \\
  X_{\beta} &   \sqrt2 g_2 & X_{2\beta} \\
  & X_{2\beta} &   \sqrt2 g_3 & X_{4\beta} \\
  && \ddots &   \ddots & \ddots \\
  &&& X_{(n-2)\beta} & \sqrt2g_{n-1}& X_{(n-1)\beta} \\
   &&&& X_{(n-1)\beta}&   \sqrt2g_{n} 
  \end{pmatrix}
\end{equation}
are distributed according to $f_{\text{Herm}}$ \cite{DE02} so that
\begin{gather}
M_1^\textnormal{H} \sim \frac{1}{n} \tr\mathbf H=\frac1n\sum_{i=1}^ng_i\sim\mathcal N(0,1/n),\\
M_2^\textnormal{H}\sim\frac 1n \tr (\mathbf H^2) = \frac{1}{n}\sum_{i=1}^ng_i^2+\frac1n\sum_{i=1}^{n-1} X_{i\beta}^2\sim\frac 1n \chi_{n+\beta n(n-1)/2}^2.
\end{gather}
\end{proof}

\printbibliography

\end{document}